\edef\marginnotetextwidth{\the\textwidth}
\DeclareMathOperator{\Sym}{Sym}
\newcommand{\NS}{N\'eron-Severi}
\theoremstyle{definition}
\newtheorem*{rep@theorem}{\rep@title}
\newcommand{\newreptheorem}[2]
{\newenvironment{rep#1}[1]
{\def\rep@title{#2 \ref{##1}}
\begin{rep@theorem}}
{\end{rep@theorem}}}
\newtheorem{lemma}{Lemma}
\newtheorem{theorem}{Theorem}
\newtheorem{corollary}{Corollary}
\newtheorem{definition}{Definition}
\newtheorem{conjecture}{Conjecture}
\newtheorem{question}{Question}
\DeclareMathOperator{\vol}{vol}
\DeclareMathOperator{\nef}{Nef}
\DeclareMathOperator{\psef}{Psef}
\title{Asymptotic Purity for Very General Hypersurfaces of $\boldsymbol{\mathbb{P}^n\times\mathbb{P}^n}$ of bidegree $\mathbf{(k,k)}$}
\author{Michael A. Burr}
\address{Fordham University, 441 East Fordham Road, Bronx, NY 10458, USA}
\email{mburr1@fordham.edu}
\date{\today}
\begin{document}
\begin{abstract}
For a complex irreducible projective variety, the volume function and the higher asymptotic cohomological functions have proven to be useful in understanding the positivity of divisors as well as other geometric properties of the variety.  In this paper, we study the vanishing properties of these functions on specific hypersurfaces of $\mathbb{P}^n\times\mathbb{P}^n$.  In particular, we show that very general hypersurfaces of bidegree $(k,k)$ obey a very strong vanishing property, which we define as {\em asymptotic purity}: at most one asymptotic cohomological function is nonzero for each divisor.  This provides evidence for a conjecture of Bogomolov and also suggests some general conditions for asymptotic purity.
\end{abstract}
\maketitle
\section{Introduction}
Let $X$ be an irreducible projective variety over $\mathbb{C}$.  The volume function on the real \NS\ group and its generalizations, the higher asymptotic cohomological functions \cite{Kuronya:Article:Cohomological}, have been shown to capture many of the positivity properties of divisors on $X$; in this paper, all divisors will be Cartier.  For example, a divisor is big (i.e., is in the interior of the effective cone) if and only if its volume is nonzero \cite{Lazarsfeld:BookI}, and a divisor is ample if and only if there exists a neighborhood in the real \NS\ group in which all of the higher asymptotic cohomological functions vanish \cite{deFernexetal:Article}.  We expect that the higher asymptotic cohomological functions provide additional information about the non-positive divisors on a variety.  In this paper, we explore vanishing properties of these asymptotic cohomolomogical functions.  In particular, we show that when $X$ is a very general subvariety of $\mathbb{P}^n\times\mathbb{P}^n$ of bidegree $(k,k)$, a strong vanishing property, called {\em asymptotic purity} holds: for each divisor on $X$, at most asymptotic cohomological function does not vanish.

The contributions of this paper are that we present a family of varieties which enjoy the strong vanishing property of asymptotic purity.  This is the first nontrivial collection of varieties which are asymptotically pure.  Prior to this paper, Abelian varieties, homogeneous spaces, curves, and surfaces which do not contain a negative curve were known to be asymptotically pure \cite{Kuronya:Article:Cohomological,Kuronya:Thesis,MyThesis}; in addition, it was proved in \cite{MyThesis} that the only complete and simplicial toric varieties which are asymptotically pure are products of projective spaces and their quotients, for more examples of asymptotically pure and counter examples see Section \ref{sec:AP}.  A second contribution of this paper is in the computation of the asymptotic cohomology.  The computation for this paper is somewhat unusual because we compute the asymptotic cohomology using a non-reduced fiber in a linear system of divisors.  This is unusual because non-reduced varieties  frequently appear as counterexamples to conjectures.  Here, however, the computation on this non-reduced fiber shows how well-behaved the asymptotic cohomological functions are.

\subsection{Volume and Higher Asymptotic Cohomological Functions}
The motivation for the volume is based in the Riemann-Roch problem which asks to find $h^0(X,mD):=\dim{H^0(X,\mathcal{O}_X(mD))}$ for a divisor $D$ and $m\gg0$.  When $D$ is ample, Serre's vanishing theorem gives a solution to this question: in this case, for $m\gg0$, all of the higher cohomology vanishes and $h^0(X,mD)$ is equal to the Euler characteristic $\chi(X,\mathcal{O}_X(mD))$.  However, when $h^0(X,mD)$ is studied as a function of $m$ in the case where $D$ is a fixed effective but not nef divisor, then this function can behave quite poorly, e.g., the natural graded ring $\oplus H^0(X,\mathcal{O}_X(mD))$ might not be finitely generated \cite{Lazarsfeld:BookI,Zariski:Article,Cutkosky:Zariski:Article,Cutkosky:Periodicity:Article}.  It was realized in \cite{Fujita:Article,Nakayama:Article,Tsuji:Article,Tsuji:Article:II} that it makes sense to study the growth rate of the number of independent sections into $\mathcal{O}_X(mD)$ for large multiples of $D$ and that this choice avoids many of the pathologies of $h^0(X,mD)$.  Let $X$ be of dimension $n$, then the volume is defined as:
$$\vol(X,D):=\lim_{m\rightarrow\infty}\frac{h^0(X,mD)}{(m^n/n!)}.$$
Algebraically, the volume appears in Cutkosky's work on the existence (and nonexistence) of Zariski decompositions, and, analytically, the volume appears in Demailly's holomorphic Morse inequalities \cite{Demailly:HolomorphicRecent,Demailly:HolomoprhicLectures,Demailly:Original,Siu:Holomorphic,Bouche:Article}.  The volume has found several uses, e.g., it is used to bound the size of the birational automorphism group for varieties of general type in \cite{Hacon:Birational}, and, in \cite{Takayama:Article}, the volume and its related invariants has proven to be useful for additional geometric results.

The volume admits many nice properties which are summarized in \cite{Lazarsfeld:BookI,Lazarsfeld:BookII}.  We highlight some of the most interesting properties here: The volume is homogeneous and extends to a continuous function on the real \NS\ group $NS(X)\otimes\mathbb{R}$, \cite{Lazarsfeld:BookI,Boucksom:Volume}.  When $D$ is ample, the volume $\vol(X,D)$ can be represented geometrically as
$$\vol(X,D)=\int_X c_1(\mathcal{O}_X(D))^n.$$
In other words, the volume is equal to the self-intersection number of $D$ and can be computed as the volume of an associated K\"ahler metric on $X$.  In addition, the volume of a divisor $D$ is nonzero precisely when $D$ is big \cite{Lazarsfeld:BookI}.  It is also an increasing function in the ample direction, i.e., if $A$ is an ample divisor, then $\vol(X,D+tA)$ is an increasing function in $t$.

Recently, the higher dimensional analogues of the volume were introduced and studied by K\"uronya \cite{Kuronya:Article:Cohomological,Kuronya:Thesis}.  These functions were defined as follows:
$$\widehat{h}^i(X,D):=\limsup_{m\rightarrow\infty}\frac{h^i(X,mD)}{(m^n/n!)}.$$
For $i=0$, $\widehat{h}^0(X,D)$ is precisely the volume since the limsup is actually a limit; we hope that the limsup is a limit which is indeed true in many cases, but is not known in full generality\footnote{The paper \cite{Demailly:HolomorphicRecent} contains an, as of yet, unproven claim that the limsup is actually a limit and the author is aware of this issue \cite{DemaillyPC}}, see \cite{SurveyLazarsfeldetal, Hering:Toric,Kuronya:Article:Cohomological} for special cases.  Demailly, via the holomorphic Morse inequalities, has provided bounds on the asymptotic cohomological functions in terms of the integral of an associated (1,1)-form over a submanifold of the variety $X$.  These bounds can also be adapted to become the algebraic Morse inequalities, which bound the asymptotic cohomological functions in terms of appropriate intersection numbers.

K\"uronya showed that these functions are homogeneous, extend to continuous functions on the real \NS\ group, and that they enjoy several formal properties with simple statements:  For example, there is an asymptotic version of Serre's duality theorem: for any divisor $D$, $\widehat{h}^i(X,D)=\widehat{h}^{n-i}(X,D)$ \cite{Kuronya:Article:Cohomological}.  K\"uronya also proves that the asymptotic cohomological functions are well-behaved under pullbacks: if $f:Y\rightarrow X$ is a proper, surjective, and generically finite map of degree $d$ of irreducible projective varieties with $D$ a divisor on $X$, then
$$\widehat{h}^i(Y,f^\ast D)=d\cdot \widehat{h}^i(X,D).$$  Also, via the standard K\"unneth formula, K\"uronya shows that
$$\widehat{h}^i(X_1\times X_2,\pi_1^\ast D_1\otimes\pi_2^\ast D_2)\leq\binom{n_1+n_2}{n_1}\sum_{j+k=i}\widehat{h}^j(X_1,D_1)\widehat{h}^k(X_2,D_2)$$
for two varieties $X_1,X_2$ with dimensions $n_1,n_2$, projection maps $\pi_1,\pi_2$, and divisors $D_1,D_2$, respectively.  Finally, an interesting use of the higher asymptotic cohomological functions is to provide evidence when a divisor is not ample.  In particular, in \cite{deFernexetal:Article}, the authors show that a divisor $D$ is ample if and only if there is a neighborhood of that divisor in the real \NS\ group where $\widehat{h}^i$ vanishes for all $i>0$.

Even though the asymptotic cohomological functions enjoy several pleasant properties, there are some subtleties which are not well understood: For example, there are only a few classes of varieties for which the values of the asymptotic cohomological functions can be calculated or even determined to be nonzero.  In addition, it is not known, except in a few special cases when they are piecewise polynomial functions.  In this paper, we study some of the vanishing properties of these asymptotic cohomological functions.  In particular, we show that for any divisor on a very general subvariety of $\mathbb{P}^n\times\mathbb{P}^n$ of bidegree $(k,k)$, {\em at most} one asymptotic cohomological function does not vanish.
\subsection{Vanishing Theorems}
There is a vast collection of cohomological vanishing theorems for invertible sheaves over projective varieties, see \cite{Lazarsfeld:BookI,Iskovskikh:Vanishing:Survey,Kollar:Survey} for surveys.  Typically, these vanishing theorems apply only to ample or big and nef divisors and are used to show that all of the higher cohomology groups vanish.  Two exceptions to this pattern appear in a vanishing theorem of Andreotti-Grauert \cite{Andreotti:Grauert} and, more recently, in \cite{Kuronya:Subvarieties}.  The results in these papers give sufficient conditions for the vanishing of the higher cohomology groups even when the divisor under consideration might not be big.  These results, however, are typically not useful in the asymptotic case because they are too strong and restrictive; it is certainly true that if $h^i(X,mD)$ vanishes for $m\gg 0$, then the asymptotic cohomological functions also vanish. However, it is not necessary for $h^i(X,mD)$ to vanish for the asymptotic cohomological functions to vanish; it is only necessary that $h^i(X,mD)$ grows slow enough in $m$.  We exhibit this difference: when the vanishing theorem presented in \cite{Kuronya:Subvarieties} is applied in the cases considered in this paper, it cannot be used to show the vanishing of the asymptotic cohomological functions; whereas, in this paper, we prove the vanishing of these functions.

Compared to the absolute case, there are considerably fewer asymptotic vanishing or (equally interesting) asymptotic nonvanishing results.  The simplest is the asymptotic version of Serre's vanishing theorem which states that for all nef divisors, all of the higher asymptotic cohomological functions vanish \cite{Kuronya:Article:Cohomological}.  As a more general case, if the stable base locus of a big divisor is $d$-dimensional, then all asymptotic cohomology in dimension above $d$ vanish \cite{Kuronya:Article:Cohomological}.  In addition, if the application of Demailly's Morse inequalities vanishes, then this bound exhibits the vanishing of an asymptotic cohomological function \cite{Demailly:Original,Totaro:vanishing,Bouche:Article}.  These bounds, however, are not strong enough to achieve the results in this paper.  For the asymptotic cohomological functions, nonvanishing results are also interesting.  For example, for all big divisors, the volume (i.e., the zero-th cohomological function) does not vanish.  Recently, Demailly has also discussed some cases when the asymptotic Morse inequalities are tight or can be used to give lower bounds on the asymptotic cohomology (and hence, evidence for the nonvanishing of the asymptotic cohomology functions) \cite{Demailly:Original,Demailly:HolomorphicRecent,Demailly:Andreotti}.  Another nonvanishing result was provided in \cite{deFernexetal:Article}, where it was shown that in all neighborhoods of divisors which are not ample, there must be a divisor with nonvanishing higher cohomology.  A nonvanishing result which considers a related question to the one considered in this paper was provided in \cite{Bogomolov:Nonvanishing} where the authors showed that $h^1(X,S^m\Omega_X^1)$ has large growth for high symmetric tensors of the canonical sheaf when the underlying variety is sufficiently singular.
\subsection{Main Results}
In this paper, we provide positive evidence for a conjecture of Bogomolov \cite{Bogomolov:Personal} concerning the structure of asymptotic cohomological functions, see Section \ref{sec:AP}.  In particular, we prove the following result:
\begin{theorem}\label{thm:main}
Let $X$ be a very general hypersurface of $\mathbb{P}^n\times\mathbb{P}^n$ of bidegree $(k,k)$ and $D$ any divisor on $X$.  Then, there exists an $i$ such that $\widehat{h}^j(X,D)=0$ for all $j\not=i$.
\end{theorem}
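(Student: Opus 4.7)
The plan is to reduce the problem to a case analysis on the two-dimensional N\'eron--Severi lattice of $X$. For $n \geq 2$, the Lefschetz hyperplane theorem applied to the ample hypersurface $X \subset \mathbb{P}^n \times \mathbb{P}^n$ yields $\mathrm{Pic}(X) \cong \mathrm{Pic}(\mathbb{P}^n \times \mathbb{P}^n) = \mathbb{Z} H_1 \oplus \mathbb{Z} H_2$, where $H_i$ is the restriction of the $i$-th hyperplane class; the case $n=1$ concerns curves, which are already on the list of known asymptotically pure varieties. Writing $D = a H_1 + b H_2$, if $a, b \geq 0$ then $D$ is nef and asymptotic Serre vanishing gives $\widehat{h}^i(X, D) = 0$ for all $i \geq 1$; if $a, b \leq 0$, asymptotic Serre duality converts this to the previous case and only $\widehat{h}^{2n-1}$ can survive. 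By the symmetry exchanging the two factors, the remaining case is $a > 0$, $b < 0$.

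In this mixed-sign case I will use the short exact sequence on $\mathbb{P}^n \times \mathbb{P}^n$,
\[
0 \to \mathcal{O}(ma - k, mb - k) \xrightarrow{\;\cdot F\;} \mathcal{O}(ma, mb) \to \mathcal{O}_X(mD) \to 0,
\]
where $F \in H^0(\mathcal{O}(k, k))$ is the defining equation of $X$. By K\"unneth, for $a > 0$, $b < 0$, and $m$ large, the cohomology of the two outer line bundles is concentrated in total degree $n$. The long exact sequence then forces $H^i(X, mD) = 0$ for $i \notin \{n-1, n\}$ and yields the four-term exact sequence identifying $H^{n-1}(X, mD) = \ker \phi_m$ and $H^n(X, mD) = \operatorname{coker} \phi_m$, where $\phi_m$ is multiplication by $F$ on the middle $H^n$ groups. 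An explicit computation with binomials shows that $\dim H^n(\mathcal{O}(ma, mb)) - \dim H^n(\mathcal{O}(ma-k, mb-k))$ has leading $m$-term proportional to $-k(a+b)\,a^{n-1}(-b)^{n-1}$; hence the problem reduces to showing that $\phi_m$ is asymptotically surjective when $a + b > 0$ and asymptotically injective when $a + b < 0$.

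The very general hypothesis and the non-reduced-fiber computation advertised in the introduction enter at this stage. The plan is to invoke upper semi-continuity of $h^i$ in the flat family of all bidegree $(k, k)$ hypersurfaces and specialize to the non-reduced member $X_0 = V(f^k)$, the $k$-th thickening of a smooth bidegree $(1, 1)$ hypersurface $Y = V(f)$. The ideal-power filtration of $\mathcal{O}_{X_0}$ has graded pieces isomorphic to $\mathcal{O}_Y(-j, -j)$ for $j = 0, \dots, k-1$, so that $h^i(X_0, mD) \leq \sum_{j=0}^{k-1} h^i(Y, \mathcal{O}_Y(ma - j, mb - j))$. Running the same K\"unneth and long exact sequence analysis on $Y$ reduces the asymptotic vanishing to a rank statement for multiplication by the bilinear form $f$ between certain $H^n$ groups on $\mathbb{P}^n \times \mathbb{P}^n$.

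The hard part will be to verify that for very general $f$ this multiplication-by-$f$ map attains the maximal possible rank, so that one of its kernel or cokernel grows as $o(m^{2n-1})$. I expect this to be a direct but delicate computation using the standard K\"unneth description of the cohomology of $\mathcal{O}(c_1, c_2)$ on $\mathbb{P}^n \times \mathbb{P}^n$, exploiting the explicit bilinear algebra of the general $(1, 1)$ form $f$. Once the rank statement on $Y$ is in hand, summing the contributions from each graded piece of the filtration bounds $h^i(X_0, mD)$ by $o(m^{2n-1})$ for the appropriate $i \in \{n-1, n\}$, and upper semi-continuity transfers the vanishing back to the very general $X$, yielding asymptotic purity.
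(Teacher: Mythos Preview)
Your overall architecture matches the paper's exactly: reduce via Lefschetz to the rank-two N\'eron--Severi lattice, dispose of the nef and anti-nef quadrants, use the ideal sequence to isolate $H^{n-1}$ and $H^n$ as kernel and cokernel of multiplication by the defining equation, then specialize in the flat family to the non-reduced member $V\bigl((\sum x_iy_i)^k\bigr)$ and invoke upper semi-continuity. Your dimension count and the identification of which side must be ``asymptotically of maximal rank'' are also correct.

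The genuine difference is in how the special-fiber computation is carried out. You propose to filter $\mathcal{O}_{X_0}$ by powers of the ideal of the reduced $(1,1)$ hypersurface $Y$, with graded pieces $\mathcal{O}_Y(-j,-j)$, and thereby reduce to the $k=1$ case: analyzing multiplication by the single bilinear form $f=\sum x_iy_i$. The paper instead attacks multiplication by $f^k$ directly, without the filtration. Its key observation is that the choice $f=\sum x_iy_i$ (equivalently, after dualizing the second factor, $\sum x_i\otimes y_i^\ast$) makes the multiplication map $SL(n+1)$-equivariant; Pieri's rule then decomposes both source and target as multiplicity-free sums of irreducibles $\Gamma_{m(a_1+a_2)-(n+1)-2i,\,i,\,0,\dots,0}$, and a highest-weight-vector check shows the map is nonzero (hence an isomorphism) on every irreducible common to both sides. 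The kernel or cokernel is then \emph{exactly} the handful of extra irreducibles on one side, whose dimensions are read off from the Weyl formula. Your filtration trick is a perfectly valid alternative and buys you a reduction to $k=1$, but the ``direct but delicate computation'' you anticipate for that case is still most cleanly done by this same representation-theoretic argument; the $SL(n+1)$-equivariance of $\sum x_iy_i$ is what makes the rank computation tractable, and you should plan to use it rather than raw bilinear algebra. One small slip: at the point where you write ``for very general $f$,'' $f$ is already fixed as the (unique up to $GL$) smooth $(1,1)$ form---there is no further genericity to invoke.
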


I would like to thank Fedor Bogomolov and Jenia Tevelev for their valuable input and ideas during the preparation of this manuscript.  In particular, Jenia Tevelev's help was instrumental in the development of a simpler case of Theorem \ref{lem:representation}.

\section{Asymptotic Cohomological Functions and Asymptotic Purity}\label{sec:AP}
Our goal in this paper is to study vanishing properties of the asymptotic cohomological functions.  In particular, we are interested in providing necessary and sufficient conditions for the vanishing of the higher asymptotic cohomological functions in the same spirit as in the relationship between the volume and big divisors.  As a first step towards that goal, we consider divisors with at most one nonvanishing asymptotic cohomological function:
\begin{definition}
Let $X$ be an irreducible projective variety and $D$ any divisor on $X$.  $D$ is said to be {\em asymptotically pure} (abbreviated AP) if there exists an index $i$ such that $\widehat{h}^j(X,D)=0$ for all $j\not=i$.  Note that $D$ is asymptotically pure if all of the asymptotic cohomological functions vanish (in this case, the choice of $i$ is not unique).  Similarly, we say that $X$ is {\em asymptotically pure} (abbreviated AP) if every (Cartier) divisor on $X$ is asymptotically pure.  We use the same notation for asymptotic purity for a divisor and a variety, so if $D$ is both a projective subvariety and a Weil divisor corresponding to a Cartier divisor, both terms could apply, but in these cases, the sense of the purity will be clear from context.
\end{definition}

We begin with some easy examples of this concept.  All nef divisors are AP because the asymptotic version of Serre's vanishing theorem implies that only $\widehat{h}^0$ may be nonzero.  All curves are asymptotically pure; this is an easy consequence of the Riemann-Roch formula.  All Abelian varieties are AP because the nonvanishing of the asymptotic cohomology is governed by Mumford's index theorem, see \cite{Kuronya:Article:Cohomological}.  It was also shown in \cite{Kuronya:Article:Cohomological} that homogeneous spaces are AP.  A surface is AP if and only if it contains no curves with negative self-intersection number; in particular, a ruled surface is AP if and only if it is semistable \cite{Lazarsfeld:BookI}.  Projective spaces with Picard number 1 are also AP since every divisor is a multiple of an ample divisor.  In \cite{Hering:Toric}, the definition of the higher asymptotic cohomological functions was extended to complete toric varieties; in this case, the only AP complete toric varieties are products of projective spaces and their quotients by finite subgroups of the action \cite{MyThesis}.  Finally, if $f:Y\rightarrow X$ is generically finite, but not finite, then $Y$ is not AP; in particular, blowups of varieties can never be AP \cite{MyThesis,Sho:Personal}.

The question we begin to address in this paper is to find necessary and sufficient conditions for a variety to be AP.  It is certainly necessary for the big and ample cones to coincide in the \NS\ group because every big divisor has a neighborhood where $\widehat{h}^0$ does not vanish and the result of \cite{deFernexetal:Article} implies that if a divisor were big but not ample, in every neighborhood of the divisor, there is another divisor with $\widehat{h}^i$ nonvanishing for some $i>0$.  This would contradict the asymptotic purity.  This conjecture is formulated as follows:

\begin{conjecture}[\cite{Bogomolov:Personal}]\label{conj}
Let $X$ be a smooth, projective $3$-fold.  Then $X$ is AP if and only if the big and ample cones are equal.
\end{conjecture}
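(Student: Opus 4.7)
The plan is to break the biconditional into the already-sketched easy direction and a substantially harder converse, and to exploit the asymptotic Serre duality $\widehat{h}^i(X,D)=\widehat{h}^{n-i}(X,-D)$ together with the asymptotic Serre vanishing theorem to isolate the one case that carries all of the content.

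For the ``only if'' direction I would simply upgrade the paragraph immediately preceding the conjecture to an actual proof. Suppose $X$ is AP but that some big divisor $D$ fails to be ample. Then $\widehat{h}^0(X,D)\neq 0$ because $D$ is big, while \cite{deFernexetal:Article} guarantees a divisor $D'$ arbitrarily close to $D$ with $\widehat{h}^i(X,D')\neq 0$ for some $i>0$. Since bigness is an open condition, $D'$ is still big and hence $\widehat{h}^0(X,D')\neq 0$ as well; thus $D'$ is not AP, and the contradiction proves $\operatorname{Big}(X)=\operatorname{Amp}(X)$.

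For the ``if'' direction, assume $\operatorname{Big}(X)=\operatorname{Amp}(X)$, so that $\psef(X)=\nef(X)$, and let $D$ be any divisor on the smooth projective $3$-fold $X$. If $D$ is big, then $D$ is ample, and asymptotic Serre vanishing forces $\widehat{h}^i(X,D)=0$ for $i>0$. If $-D$ is big, applying the same to $-D$ and using Serre duality forces $\widehat{h}^i(X,D)=0$ for $i<3$. Otherwise $D$ is ``mixed'' (neither $D$ nor $-D$ big), and Serre duality together with the characterization of $\vol$ via bigness give $\widehat{h}^0(X,D)=\widehat{h}^3(X,D)=0$ for free; what remains is to show that at most one of $\widehat{h}^1(X,D)$ and $\widehat{h}^2(X,D)$ is nonzero.

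This last step is where the entire conjecture is hidden, and it is the main obstacle. My attack would combine three ingredients: the continuity of each $\widehat{h}^i$ on $\NS(X)\otimes\mathbb{R}$; an asymptotic Riemann--Roch relation tying the signed sum $\widehat{h}^0-\widehat{h}^1+\widehat{h}^2-\widehat{h}^3$ to the degree-three polynomial $\chi(X,mD)/m^3$; and a structural analysis of smooth projective $3$-folds with $\psef=\nef$, which by the Cone Theorem rules out both blowups and the usual extremal rays and forces the geometry to resemble a minimal model with trivial Zariski decomposition. After sanity-checking the strategy on Abelian $3$-folds, on smooth Fano $3$-folds of Picard number one, and on products $S\times C$ with $S$ an AP surface and $C$ a curve, I would try to transplant such arguments to the general case. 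The deepest difficulty is the absence of a global Zariski decomposition in dimension three; circumventing it will likely require either a face-by-face analysis of $\partial\nef(X)$ under the hypothesis $\psef=\nef$, or a sharpened form of Demailly's holomorphic Morse inequalities specifically calibrated to rule out the simultaneous nonvanishing of $\widehat{h}^1$ and $\widehat{h}^2$ at a mixed class.
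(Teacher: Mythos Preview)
The statement you are addressing is stated in the paper as a \emph{conjecture}; the paper does not prove it and does not claim to. What the paper supplies is (i) the easy direction, sketched in the paragraph immediately preceding the conjecture, and (ii) evidence for the hard direction in the special case of very general bidegree-$(k,k)$ hypersurfaces of $\mathbb{P}^n\times\mathbb{P}^n$ (Theorem~\ref{thm:main}). There is therefore no ``paper's own proof'' of the full biconditional to compare against.

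Your ``only if'' argument is correct and coincides with the paper's paragraph: bigness is open, $\widehat{h}^0$ is nonzero on big classes, and the result of \cite{deFernexetal:Article} produces a nearby class with some higher $\widehat{h}^i$ nonzero, contradicting AP. Your reduction of the ``if'' direction to the single assertion that, for a mixed class $D$ on a smooth $3$-fold with $\psef(X)=\nef(X)$, at most one of $\widehat{h}^1(X,D)$ and $\widehat{h}^2(X,D)$ is nonzero, is also correct and is exactly the residual content of the conjecture.

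From that point on, however, what you have written is a research outline, not a proof, and you say as much. None of the ingredients you list---the Cone Theorem, a hoped-for Zariski-type decomposition, face-by-face analysis of $\partial\nef(X)$, or sharpened Morse inequalities---is known to force the simultaneous vanishing you need. The asymptotic Riemann--Roch relation in particular only constrains the alternating sum $\sum_i(-1)^i\widehat{h}^i$ (and even that requires the $\limsup$ to be a limit, which is not known in general); it cannot by itself prevent $\widehat{h}^1$ and $\widehat{h}^2$ from being simultaneously positive. The paper's own method for its special case---upper semicontinuity from a deliberately chosen nonreduced fiber, combined with an explicit $SL(n{+}1,\mathbb{C})$-equivariant computation via Pieri's rule---is orthogonal to your outline and does not extend to arbitrary smooth $3$-folds. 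In short, there is no erroneous step to flag; the gap is that the conjecture remains open, and your proposal does not close it.
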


In higher dimensions, it seems unlikely that this condition would be enough.  For higher dimensions, we formulate this conjecture as a question:

\begin{question}
Let $X$ be a smooth, projective variety.  What conditions are necessary and sufficient for $X$ to be AP?
\end{question}

In this paper, we set out to understand and develop the above conjecture; we discuss some thoughts on the question above in the Discussion Section \ref{sec:conc}.  A natural place to study this conjecture is on hypersurfaces of $\mathbb{P}^2\times\mathbb{P}^2$.  For smooth $3$-folds of this form, the real \NS\ group can be described explicitly and it is guaranteed that the big and ample cones are equal.  In this paper, we study hypersurfaces of $\mathbb{P}^n\times\mathbb{P}^n$ because smooth hypersurfaces of this product will have relatively few nonzero asymptotic cohomology classes.  We begin by recalling some facts about the real \NS\ group of $\mathbb{P}^n\times\mathbb{P}^n$ and discuss some of our notation.

The real \NS\ group of $\mathbb{P}^n\times\mathbb{P}^n$ is $\mathbb{R}^2$ and is generated by the pullbacks of the hyperplane sections on each of the projective factors.  Let $H_1$ be the pullback of the hyperplane from the first projective space and $H_2$ the pullback of the hyperplane from the second projective space.  The intersection numbers of these divisors is very simple: $H_1^iH_2^{2n-i}=0$ if $i\not=n$ and equals $1$ when $i=n$.  Since both $H_1$ and $H_2$ are pullbacks of nef divisors, they are nef and the cone they define is contained within the nef cone.  The cone they define actually is the nef cone because every divisor outside this cone has a negative intersection number with some positive combination of these two divisors.  In particular, this shows that the nef and pseudoeffective cones are equal in $\mathbb{P}^n\times\mathbb{P}^n$.  In this paper, we will use the notation $|(k,l)|$ to refer to the complete linear series $|kH_1+lH_2|$.  Our interest is in general hypersurfaces of $\mathbb{P}^n\times\mathbb{P}^n$; we begin with a discussion of their real \NS\ group.

\begin{lemma}\label{lem:restriction}
For a general hypersurface $X$ in $|(k,k)|$, $X$ is a smooth and irreducible projective variety where the real \NS\ group $N^1(X)_{\mathbb{R}}$ is $\mathbb{R}^2$.  Moreover, the big and ample cones of $X$ are equal.  In particular, let $D=a_1H_1+a_2H_2$ be any divisor on $\mathbb{P}^n\times\mathbb{P}^n$.  If both $a_1,a_2\geq 0$, then $D|_X$ is nef and $\widehat{h}^i(X,D|_X)=0$ for $i\not=0$.  Similarly, if both $a_1,a_2\leq 0$, then $D|_X$ is $-$nef and $\widehat{h}^i(X,D|_X)=0$ for $i\not=2n-1$.  Finally, if $a_1\cdot a_2<0$, i.e., their signs differ, then $\widehat{h}^i(X,D|_X)=0$ for $i\not=n,n-1$.
\end{lemma}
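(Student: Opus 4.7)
The plan is to split the claim into four essentially independent pieces---smoothness and irreducibility, the rank of $N^1(X)_\mathbb{R}$, equality of the big and ample cones, and the case-by-case vanishing---and handle each with standard tools. First I would verify that a general $X \in |(k,k)|$ is smooth by applying Bertini to this base-point-free linear system. Irreducibility then follows from $H^0(X, \mathcal{O}_X) = \mathbb{C}$, which one reads off from the short exact sequence
\[ 0 \to \mathcal{O}_{\mathbb{P}^n \times \mathbb{P}^n}(-X) \to \mathcal{O}_{\mathbb{P}^n \times \mathbb{P}^n} \to \mathcal{O}_X \to 0 \]
together with K\"unneth applied to $\mathcal{O}(-k,-k)$. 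For the \NS\ group, the Lefschetz hyperplane theorem gives $H^2(\mathbb{P}^n \times \mathbb{P}^n, \mathbb{Z}) \cong H^2(X, \mathbb{Z})$ once $n \ge 2$ (so that $2 < \dim X = 2n-1$), and the same sequence also delivers $H^1(X, \mathcal{O}_X) = H^2(X, \mathcal{O}_X) = 0$. Hence $\mathrm{Pic}(X) \cong \mathbb{Z} H_1|_X \oplus \mathbb{Z} H_2|_X$ and $N^1(X)_\mathbb{R} = \mathbb{R}^2$.

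To establish big $=$ ample, I would first note that $H_1|_X, H_2|_X$ are nef as pullbacks of nef classes, so the closed first quadrant lies in $\nef(X)$. For the reverse inclusion I would introduce the two $1$-cycle classes $\eta_1 = (H_1^{n-2} H_2^n)|_X$ and $\eta_2 = (H_1^n H_2^{n-2})|_X$. A direct intersection computation on the ambient, using $H_1^n H_2^n = 1$ and $H_1^{n+1} = H_2^{n+1} = 0$, gives $\eta_1 \cdot H_1|_X = k$, $\eta_1 \cdot H_2|_X = 0$ and symmetrically for $\eta_2$, so these classes are dual to the two rays of the first quadrant. Each $\eta_j$ is a complete intersection of ample divisors on $X$ and is therefore movable, so BDPP duality forces $\psef(X)$ to lie in the first quadrant. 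The chain $\nef(X) \subseteq \psef(X) \subseteq$ first quadrant $\subseteq \nef(X)$ is then a chain of equalities, so big $=$ ample.

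The vanishing of $\widehat{h}^i$ for $i \neq 0$ in the nef regime $a_1, a_2 \ge 0$ is immediate from asymptotic Serre vanishing, and the anti-nef case $a_1, a_2 \le 0$ reduces to it via asymptotic Serre duality $\widehat{h}^i(X, D|_X) = \widehat{h}^{2n-1-i}(X, -D|_X)$. The heart of the lemma is the mixed case, say $a_1 > 0 > a_2$. Tensoring the defining sequence of $X$ by $\mathcal{O}(mD)$ gives
\[ 0 \to \mathcal{O}(mD - X) \to \mathcal{O}(mD) \to \iota_* \mathcal{O}_X(mD|_X) \to 0 \]
on the ambient. For $m \gg 0$, both $H^i(\mathbb{P}^n \times \mathbb{P}^n, \mathcal{O}(ma_1, ma_2))$ and $H^i(\mathbb{P}^n \times \mathbb{P}^n, \mathcal{O}(ma_1 - k, ma_2 - k))$ are, by K\"unneth and the cohomology of line bundles on $\mathbb{P}^n$, concentrated in degree $i = n$: the factor with positive twist contributes only in degree $0$, while the factor with sufficiently negative twist contributes only in degree $n$. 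The long exact sequence then sandwiches $H^i(X, mD|_X)$ between zero groups for every $i \notin \{n-1, n\}$ and $m \gg 0$, forcing $H^i(X, mD|_X) = 0$ outright and so $\widehat{h}^i(X, D|_X) = 0$ for such $i$.

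I expect the main obstacle to be the index bookkeeping in the mixed case: one must simultaneously track the twist by $-X$, the signs of $m a_1$ and $m a_2$, and the threshold on $m$ needed to move each K\"unneth factor into the regime where only one cohomological degree survives, so that the long exact sequence genuinely forces vanishing outside $\{n-1, n\}$. Once that is in place the conclusion drops out for free, and the other three assertions rest only on standard inputs (Bertini, Lefschetz, BDPP, and the K\"uronya vanishing and duality statements recalled in Section \ref{sec:AP}) used as black boxes.
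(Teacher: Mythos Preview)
Your proposal is correct and follows essentially the same route as the paper: Bertini for smoothness, Lefschetz for $N^1(X)_{\mathbb{R}}\cong\mathbb{R}^2$, asymptotic Serre vanishing and duality for the nef and anti-nef regimes, and the defining short exact sequence together with K\"unneth on the ambient for the mixed-sign case. The paper's write-up of the last step is the same as yours (only $H^n$ survives on the ambient for $m\gg 0$, so the long exact sequence kills $H^i(X,mD|_X)$ for $i\neq n-1,n$).

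The one place where you diverge is the proof that big $=$ ample. The paper argues more tersely, observing that any class outside the first quadrant pairs negatively against a suitable positive combination of the $H_j|_X$; you instead produce explicit curve classes $\eta_1,\eta_2$ and invoke BDPP duality. Your route is more transparent, but note a small slip: $\eta_j$ is a complete intersection of \emph{nef} divisors on $X$, not ample ones ($H_1|_X$ and $H_2|_X$ lie on the boundary of $\nef(X)$). The fix is immediate---complete intersections of nef divisors lie in the closure of the movable cone by continuity, and BDPP is stated for that closed cone---so the argument goes through unchanged once you adjust the wording.
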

\begin{proof}
The fact that a general hypersurface $X$ in $|(k,k)|$ is a smooth and irreducible projective variety follows from an application of Bertini's theorem, preceded by a Segre embedding and a product of Veronese embeddings.  Since $\mathbb{P}^n\times\mathbb{P}^n$ is $2n$ dimensional and $2n\geq 4$, Lefschetz's hyperplane theorem implies that for such a smooth hypersurface, the standard restriction of the \NS\ group is an isomorphism: i.e., $\mathbb{R}^2\simeq N^1(\mathbb{P}^n\times\mathbb{P}^n)_{\mathbb{R}}\stackrel{\sim}{\rightarrow}N^1(X)_{\mathbb{R}}$.  By the discussion preceding the lemma, it follows that for $a_1,a_2\geq 0$, $D$ is nef, and, therefore, the restriction $D|_X$ is also nef.  Then, by the asymptotic version of Serre's vanishing theorem, only $\widehat{h}^0(X,D|_X)$ can be nonzero.  Similarly, for $a_1,a_2\leq 0$, $D$ is $-$nef, and, therefore, the restriction $D|_X$ is also $-$nef.  Then, by the asymptotic version of Serre's vanishing theorem and Serre's duality theorem, only $\widehat{h}^{2n-1}(X,D|_X)$ can be nonzero (note that $2n-1$ is the dimension of $X$).  Finally, for any divisor outside of the cone generated by $H_1|_X$ and $H_2|_X$, there is a positive linear combination of these divisors with negative intersection.  This implies that the big and ample cones are equal.

The final case where one of $a_1,a_2$ is positive and the other is negative can be proved most readily from the long exact sequence in cohomology coming from the following short exact sequence of sheaves:
$$
0\rightarrow\mathcal{O}_{\mathbb{P}^n\times\mathbb{P}^n}((ma_1-1)H_1+(ma_2-1)H_2) \rightarrow\mathcal{O}_{\mathbb{P}^n\times\mathbb{P}^n}(ma_1H_1+ma_2H_2) \rightarrow\mathcal{O}_{X}(mD|_X)\rightarrow 0
$$
For $m\gg0$, most of the cohomology in the corresponding long exact sequence vanish.  In particular, by using the K\"unneth formula we arrive at the following isomorphism:
$$
H^i(\mathbb{P}^n\times\mathbb{P}^n,\mathcal{O}_{\mathbb{P}^n\times\mathbb{P}^n}(ma_1H_1+ma_2H_2))\simeq\bigoplus H^j(\mathbb{P}^n,\mathcal{O}(ma_1))\otimes H^{i-j}(\mathbb{P}^n,\mathcal{O}(ma_2)),
$$
where we see that by the computation of cohomology on projective spaces, there is at most one choice of $i$ and $j$ where a term on the RHS is nonzero.  Using these simplifications, the nonzero part of the long exact sequence in cohomology is as follows:
\begin{align*}
0\rightarrow H^{n-1}(X,\mathcal{O}_{X}(mD|_X))\rightarrow H^{n}(&\mathbb{P}^n\times\mathbb{P}^n,\mathcal{O}_{\mathbb{P}^n\times\mathbb{P}^n}((ma_1-1)H_1+(ma_2-1)H_2))\\ \rightarrow &H^{n}(\mathbb{P}^n\times\mathbb{P}^n,\mathcal{O}_{\mathbb{P}^n\times\mathbb{P}^n}(ma_1H_1+ma_2H_2)) \rightarrow
H^{n}(X,\mathcal{O}_{X}(mD|_X))\rightarrow0.
\end{align*}
This proves the lemma because all of the other cohomology groups of $\mathbb{P}^n\times\mathbb{P}^n$ in the long exact sequence vanish.  This vanishing forces the other cohomology groups to be trivial and, therefore, to have no asymptotic cohomology
\end{proof}
\section{Asymptotic Purity for Very General Hypersurfaces of $\mathbb{P}^n\times\mathbb{P}^n$ of bidegree $(k,k)$}
In this section, we prove the main result of this paper:

\begin{reptheorem}{thm:main}
Let $X$ be a very general hypersurface of $\mathbb{P}^n\times\mathbb{P}^n$ of bidegree $(k,k)$, then $X$ is asymptotically pure.
\end{reptheorem}

\begin{proof}
If $n$ is 1, then any hypersurface is a curve which is known to be AP; we, therefore, assume that $n\geq 2$.  We begin by providing a sketch of the proof: the proof can be divided up into the following three steps: First, we use the description of the \NS\ group on general divisors $X$ of $\mathbb{P}^n\times\mathbb{P}^n$ from Lemma \ref{lem:restriction} and determine what must be shown for $X$ to be AP.  Second, we study the total space of the family of divisors of bidegree $(k,k)$ and exhibit the asymptotic purity of certain divisors induced from divisors on $\mathbb{P}^n\times\mathbb{P}^n$ for a special fiber $Y$ of this family.  Finally, we use upper semi-continuity to extend the asymptotic purity for certain divisors to very general hypersurfaces in $|(k,k)|$.  Throughout this proof, we use $X$ for a very general element of $|(k,k)|$ and $Y$ for the special fiber of the family.  It is important to note that $Y$, considered as a closed subscheme of $\mathbb{P}^n\times\mathbb{P}^n$, will not be reduced, and, therefore, we do not claim that $Y$ is AP.

By Lemma \ref{lem:restriction}, it will be sufficient to show that all divisors of the form $D|_X=(a_1H_1-a_2H_2)|_X$ have at most one non-vanishing asymptotic cohomology function for $a_1,a_2\geq 0$ (note the change in sign from the discussion in Lemma \ref{lem:restriction}).  For the remainder of this proof, we study the properties of a divisor $D$ of this type.

We now use the parameterization of the complete linear series $|(k,k)|$ by the projective space $P=\mathbb{P}(H^0(\mathbb{P}^n\times\mathbb{P}^n,\mathcal{O}_{\mathbb{P}^n\times\mathbb{P}^n}(kH_1+kH_2)))$.  This can be simplified via the K\"unneth formula to $P=\mathbb{P}(\Sym^k\mathbb{C}^{n+1}\otimes\Sym^k\mathbb{C}^{n+1})$.  This corresponds to polynomials of bidegree $(k,k)$ in two sets of $(n+1)$ variables. In addition, there is the total space $T\subseteq(\mathbb{P}^n\times\mathbb{P}^n)\times P$ of the family which is flat over $P$ via the projection map.  The total space can be described explicitly in the following manner: the coordinates of $\mathbb{P}(\Sym^k\mathbb{C}^{n+1}\otimes\Sym^k\mathbb{C}^{n+1})$ correspond to the coefficients of a polynomial; then, using these coordinates as indeterminates, the variety $T$ consists of the zero set of each of these polynomials.  This $T$ fits into the following commutative diagram:
\begin{center}
\begin{tikzpicture}[>=angle 90]
\matrix(a)[matrix of math nodes,
row sep=4em, column sep=3em,
text height=1.5ex, text depth=0.25ex]
{T&(\mathbb{P}^n\times\mathbb{P}^n)\times P\\
P&\mathbb{P}^n\times\mathbb{P}^n\\};
\path[right hook->](a-1-1) edge node[above]{$i$} (a-1-2);
\path[->](a-1-1) edge node[right]{$p_2\circ i$} (a-2-1);
\path[->](a-1-2) edge node[below right]{$p_2$} (a-2-1);
\path[->](a-1-2) edge node[right]{$p_1$} (a-2-2);
\end{tikzpicture}
\end{center}
Note that over a general point of $P$, where the divisor $F$ is a smooth, irreducible variety, the fiber of the projection map is just the variety $F$, but over the divisor corresponding to $f=\left(\sum_{i=0}^n x_iy_i\right)^k$, the fiber is not reduced.  Let $Y$ be the nonreduced subscheme of $\mathbb{P}^n\times\mathbb{P}^n$ over this polynomial.  We now compute the asymptotic cohomology of the divisor $D|_Y$ in indices $n$ and $n-1$ (although asymptotic cohomology is only defined for nonreduced schemes, the growth of the cohomology is well-behaved in this case).

\begin{lemma}\label{lem:specialfiber}
Let $Y$ be the special fiber and $D$ a divisor of interest, as defined above. Then, $D|_Y$ is asymptotically pure.
\end{lemma}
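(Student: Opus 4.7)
The plan is to reduce the cohomology of $\mathcal{O}_Y(mD)$ to cohomology on $\mathbb{P}^n\times\mathbb{P}^n$ via the defining sequence of $Y$, and then analyze a single multiplication map using the representation theory of $GL_{n+1}$. Set $g=\sum_{i=0}^n x_iy_i$, so that $Y$ is cut out by $g^k$. The short exact sequence
$$0\to \mathcal{O}_{\mathbb{P}^n\times\mathbb{P}^n}(-kH_1-kH_2)\xrightarrow{\cdot g^k}\mathcal{O}_{\mathbb{P}^n\times\mathbb{P}^n}\to \mathcal{O}_Y\to 0,$$
twisted by $\mathcal{O}(ma_1H_1-ma_2H_2)$ and combined with the K\"unneth computation of Lemma~\ref{lem:restriction}, shows that for $m$ large and $a_1,a_2\geq 1$ (the degenerate cases $a_i=0$ reduce to Lemma~\ref{lem:restriction}) only the degree-$n$ cohomology of each ambient line bundle survives. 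The long exact sequence collapses to
$$0\to H^{n-1}(Y,mD|_Y)\to H^n(\mathcal{F}_1)\xrightarrow{\phi_m} H^n(\mathcal{F}_2)\to H^n(Y,mD|_Y)\to 0,$$
where $\mathcal{F}_1=\mathcal{O}((ma_1-k)H_1-(ma_2+k)H_2)$, $\mathcal{F}_2=\mathcal{O}(ma_1H_1-ma_2H_2)$, $\phi_m$ is multiplication by $g^k$, and $H^i(Y,mD|_Y)=0$ for $i\notin\{n-1,n\}$. Asymptotic purity of $D|_Y$ therefore reduces to showing that at least one of $\ker\phi_m$ or $\operatorname{coker}\phi_m$ has dimension $o(m^{2n-1})$.

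Next I would identify both sides of $\phi_m$ as $GL(V)$-representations, $V=\mathbb{C}^{n+1}$, under the diagonal action on $\mathbb{P}(V)\times\mathbb{P}(V^*)$ that fixes $g$. Via K\"unneth and Serre duality on the second factor,
$$H^n(\mathcal{F}_1)\simeq \Sym^{ma_1-k}V^*\otimes \Sym^{ma_2+k-n-1}V^*\otimes(\det V)^{-1},$$
$$H^n(\mathcal{F}_2)\simeq \Sym^{ma_1}V^*\otimes \Sym^{ma_2-n-1}V^*\otimes(\det V)^{-1},$$
and both symmetric-tensor degrees sum to $m(a_1+a_2)-n-1$. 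By Pieri's rule, each side decomposes as $\bigoplus_j S^{(m(a_1+a_2)-n-1-j,\,j,\,0,\ldots,0)}V^*\otimes(\det V)^{-1}$, with $j$ running over the intervals $\{0,\ldots,\min(ma_1-k,\,ma_2+k-n-1)\}$ for the source and $\{0,\ldots,\min(ma_1,\,ma_2-n-1)\}$ for the target. Since $\phi_m$ is $GL(V)$-equivariant and the irreducible type of each summand is labelled by $j$, Schur's lemma forces $\phi_m$ to act on every matched pair by a single scalar.

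The main technical obstacle is showing that these scalars are all non-zero in the overlap of the two ranges; I expect this to be precisely the content of the hinted Theorem~\ref{lem:representation}. A natural route is to test $\phi_m$ on a chosen highest weight vector in each $S^{(m(a_1+a_2)-n-1-j,\,j,\,0,\ldots,0)}V^*$, for instance by post-composing $\phi_m$ with the multiplication map to $\Sym^{m(a_1+a_2)-n-1}V^*$ and invoking the Euler identity to produce a non-zero coefficient, then propagating non-vanishing across the decomposition using the $\mathfrak{sl}_2$-triple generated by multiplication by $g$ and its adjoint contraction on $\bigoplus_{a,b}\Sym^a V^*\otimes \Sym^b V^*$. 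Granted this non-degeneracy, $\ker\phi_m$ and $\operatorname{coker}\phi_m$ consist only of unmatched summands; and since the two indexing sets are intervals of the form $\{0,\ldots,N\}$, one is contained in the other. Consequently, when $a_1\leq a_2$ the source range lies inside the target range, forcing $\ker\phi_m=0$ and leaving only $\widehat{h}^n(Y,D|_Y)$ possibly non-zero, while when $a_1\geq a_2$ the reverse occurs (the edge case $a_1=a_2$ splits into subcases according to the sign of $n+1-2k$ but the nestedness persists). Either way, at most one of the two surviving asymptotic cohomological functions of $D|_Y$ is non-zero, which is the claimed asymptotic purity.
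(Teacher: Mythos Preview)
Your proposal is correct and follows essentially the same route as the paper: the same defining short exact sequence, the same collapse of the long exact sequence to a single multiplication-by-$g^k$ map between degree-$n$ cohomology groups, the same K\"unneth/Serre-duality identification with tensor products of symmetric powers, the same Pieri decomposition into multiplicity-one irreducibles indexed by an interval, and the same Schur's-lemma reduction to checking that the scalar on each matched summand is nonzero. The paper carries out that last nonvanishing step exactly as you first suggest, by evaluating $x_0^k\otimes(x_0^\ast)^k$ on the $j=0$ term of an explicit highest weight vector; your alternative $\mathfrak{sl}_2$-triple argument would also work but is not what the paper uses.
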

For the flow in the theorem, we delay the proof of this lemma until after the proof is completed.

The total space $T$ has the following property: for any invertible sheaf $\mathcal{F}$ on $\mathbb{P}^n\times\mathbb{P}^n$, its pull-back to $T$ via $i^\ast p_1^\ast$ restricted to the fiber over any point $p\in P$ is exactly the restriction of the sheaf $\mathcal{F}$ to the subscheme of $\mathbb{P}^n\times\mathbb{P}^n$ represented in the fiber over $p$.

Note that the preceding calculation does not imply that $Y$ is AP because the \NS\ group for $Y$ may be larger than $\mathbb{R}^2$.  However, since $T$ is flat over $P$, for each integer $m$ and divisor $D$, by the upper semi-continuity theorem, there exists an open set $P_{m,D}\subseteq P$ such that if $X$ is the fiber over the point $p\in P_{m,D}$, then $h^{n-1}(X,mD|_X)\leq h^{n-1}(Y,mD|_Y)$ and $h^n(X,mD|_X)\leq h^n(Y,mD|_Y)$.  The intersection of all of the $P_{m,D}$'s is a very general set such that if $X$ is in this set, $\widehat{h}^{n-1}(X,mD|_X)\leq \widehat{h}^{n-1}(Y,mD|_Y)$ and $\widehat{h}^n(X,mD|_X)\leq \widehat{h}^n(Y,mD|_Y)$.  Since $D|_Y$ is asymptotically pure, one of the terms on the RHS of these inequalities vanishes and therefore $D|_X$ is also asymptotically pure.

This choice is for one $D$; since the real \NS\ group of $\mathbb{P}^n\times\mathbb{P}^n$ is $\mathbb{R}^2$, we take a countable dense subset of the real \NS\ group, $\{D_i\}$.  By intersecting each of these $P_{m,D_i}$ for all $m$ and $i$, we have a very general set such that if $X$ is in this set, each $D_i|_X$ is asymptotically pure.  By the continuity of the asymptotic cohomological functions and the assumption that the $D_i$ are dense, it follows that $X$ is asymptotically pure.
\end{proof}

We now provide the proof of the lemma appearing above:\\[7pt]
\noindent{\em Proof of lemma.}  To prove this result, we must show that at least one of $\widehat{h}^n(Y,D|_Y)$ or $\widehat{h}^{n-1}(Y,D|_Y)$ is zero.  We can compute the cohomology of $D|_Y$ using the following exact sequence of sheaves:
$$
0\rightarrow\mathcal{O}_{\mathbb{P}^n\times\mathbb{P}^n}(mD-Y)\rightarrow\mathcal{O}_{\mathbb{P}^n\times\mathbb{P}^n}(mD)\rightarrow\mathcal{O}_Y(mD|_Y)\rightarrow 0.
$$
Expanding the long exact sequence in cohomology, the following sequence appears:
\begin{equation*}\begin{split}
0\rightarrow H^{n-1}(Y,\mathcal{O}_Y(mD|_Y))\rightarrow H^n(\mathbb{P}^n&\times\mathbb{P}^n,\mathcal{O}_{\mathbb{P}^n\times\mathbb{P}^n}(mD-Y))\\&\stackrel{\cdot f}{\rightarrow}H^n(\mathbb{P}^n\times\mathbb{P}^n,\mathcal{O}_{\mathbb{P}^n\times\mathbb{P}^n}(mD))\rightarrow H^n(Y,\mathcal{O}_Y(mD|_Y))\rightarrow 0.\end{split}
\end{equation*}
We now show that as $m\rightarrow\infty$, either the dimensions of the kernel or the cokernel of the map $f$ are small; this will then imply the result via the exactness of the sequence.  We begin by expanding the cohomology groups appearing in the middle of this exact sequence:  We  first expand the term $H^n(\mathbb{P}^n\times\mathbb{P}^n,\mathcal{O}_{\mathbb{P}^n\times\mathbb{P}^n}(mD-Y))$.  Since $Y=kH_1+kH_2$, this cohomology group is $H^n(\mathbb{P}^n\times\mathbb{P}^n,\mathcal{O}_{\mathbb{P}^n\times\mathbb{P}^n}((ma_1-k)H_1+(-ma_2-k)H_2))$ which is also, by the K\"unneth formula, isomorphic (for $m\gg0$) to $H^0(\mathbb{P}^n,\mathcal{O}_{\mathbb{P}^n}(ma_1-k))\otimes H^n(\mathbb{P}^n,\mathcal{O}_{\mathbb{P}^2}(-ma_2-k))$.  Then, Serre's duality theorem implies that this is isomorphic to $H^0(\mathbb{P}^0,\mathcal{O}_{\mathbb{P}^n}(ma_1-k))\otimes H^0(\mathbb{P}^n,\mathcal{O}_{\mathbb{P}^n}(ma_2+k-(n+1)))^\vee$.  This can then be expanded to $\Sym^{ma_1-k}\mathbb{C}^{n+1}\otimes\Sym^{ma_2+k-(n+1)}(\mathbb{C}^{n+1})^\vee$.  Similarly, $H^n(\mathbb{P}^n\times\mathbb{P}^n,\mathcal{O}_{\mathbb{P}^n\times\mathbb{P}^n}(mD))\simeq H^0(\mathbb{P}^n,\mathcal{O}_{\mathbb{P}^n}(ma_1))\otimes H^0(\mathbb{P}^n,\mathcal{O}_{\mathbb{P}^n}(ma_2-(n+1)))^\vee\simeq\Sym^{ma_1}\mathbb{C}^{n+1}\otimes\Sym^{ma_2-(n+1)}(\mathbb{C}^{n+1})^\vee$. Now, we study the following map:
$$
\Sym^{ma_1-k}\mathbb{C}^{n+1}\otimes\Sym^{ma_2+k-(n+1)}(\mathbb{C}^{n+1})^\vee\stackrel{\cdot f}{\rightarrow}\Sym^{ma_1}\mathbb{C}^{n+1}\otimes\Sym^{ma_2-(n+1)}(\mathbb{C}^{n+1})^\vee.
$$
We now apply the isomorphism between $\mathbb{C}^{n+1}$ and $(\mathbb{C}^{n+1})^\ast$ taking $y_i\leftrightarrow y_i^\ast$, let $g=(\sum_{i=0}^n x_iy_i^\ast)^k$ be the image of $f$ under this isomorphism.  Then, this map becomes:
\begin{equation}\label{eq:sym}
\Sym^{ma_1-k}\mathbb{C}^{n+1}\otimes\Sym^{ma_2+k-(n+1)}\mathbb{C}^{n+1}\stackrel{\cdot g}{\rightarrow}\Sym^{ma_1}\mathbb{C}^{n+1}\otimes\Sym^{ma_2-(n+1)}\mathbb{C}^{n+1}.
\end{equation}

We study this map via the following lemma, which is proved in the Section \ref{sec:fiber} using standard representation theory.
\begin{lemma}\label{lem:representation}
Let $f=\left(\sum_{i=0}^n x_i\otimes y_i^\ast\right)^k\in\Sym^k\mathbb{C}^{n+1}\otimes\Sym^k(\mathbb{C}^{n+1})^\vee$.  Consider the natural map from $\Sym^{ma_1-k}\mathbb{C}^{n+1}\otimes\Sym^{ma_2+k-(n+1)}\mathbb{C}^{n+1}$ to $\Sym^{ma_1}\mathbb{C}^{n+1}\otimes\Sym^{ma_2-(n+1)}\mathbb{C}^{n+1}$ given by multiplication by $f$.  When $a_1\geq a_2$, the size of the cokernel of this map is $O(m^{2n-2})$ as $m\rightarrow\infty$ and when $a_1\geq a_2$, the size of the kernel of this map is $O(m^{2n-2})$ as $m\rightarrow\infty$ with constants depending on $n$ and $k$.
\end{lemma}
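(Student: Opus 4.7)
The plan is to exploit the natural $\mathrm{GL}_{n+1}(\mathbb{C})$-equivariance of the multiplication map in Equation~\ref{eq:sym}, reducing the analysis to an explicit scalar computation via Schur's lemma. Set $V = \mathbb{C}^{n+1}$, $a = ma_1 - k$ and $b = ma_2 + k - (n+1)$. Under the Serre duality identification, the map is presented as multiplication by the polarization operator $\Delta^k$, where $\Delta = \sum_{i=0}^n x_i \otimes \partial_{y_i}$. Since $\Delta$ is the canonical identity element of $V \otimes V^\vee = \mathrm{End}(V)$, it is $\mathrm{GL}_{n+1}$-invariant under the diagonal action, and therefore the induced map $\Sym^a V \otimes \Sym^b V \to \Sym^{a+k} V \otimes \Sym^{b-k} V$ is $\mathrm{GL}_{n+1}$-equivariant.

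By Pieri's rule, both source and target decompose multiplicity-free into Schur functors with the same partition labels:
\[
\Sym^a V \otimes \Sym^b V = \bigoplus_{j=0}^{\min(a,b)} S_{(a+b-j,\,j)} V, \qquad \Sym^{a+k} V \otimes \Sym^{b-k} V = \bigoplus_{j=0}^{\min(a+k,\,b-k)} S_{(a+b-j,\,j)} V.
\]
By Schur's lemma, the map acts on each source summand $S_{(a+b-j,j)}V$ as a scalar $c_j$ times the canonical identification with the corresponding target summand when present, and as zero otherwise. I would compute $c_j$ by applying $\Delta^k$ to the standard highest weight vector
\[
v_j = \sum_{i=0}^{j} (-1)^i \binom{j}{i} x_0^{a-i} x_1^{i} \otimes y_0^{b-j+i} y_1^{j-i},
\]
which lies in the four-variable $(x_0,x_1,y_0,y_1)$-subspace, and extracting the coefficient of the leading target monomial $x_0^{a+k} \otimes y_0^{b-k-j} y_1^{j}$. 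Only the single multi-index $\alpha = (k,0,\ldots,0)$ together with $i=0$ contributes, yielding $c_j = (b-j)!/(b-j-k)!$. In particular $c_j \neq 0$ precisely when $j \leq b-k$.

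The two cases of the lemma then follow by comparing the Pieri summation ranges. If $a_1 \geq a_2$, then for $m \gg 0$ we have $a \geq b$, so the target runs over $j \in [0,\,b-k]$ and every such target summand is hit with a nonzero scalar $c_j$; hence the cokernel vanishes, which is certainly $O(m^{2n-2})$. The symmetric argument for $a_2 \geq a_1$ produces a vanishing kernel. The only delicate regime is when $a_1$ and $a_2$ are exactly equal, so that $a - b$ is bounded and the source/target Pieri ranges can overlap imperfectly; in that case the residual irreducible summands $S_{(p,q)}V$ appearing in the kernel or cokernel have $p - q = O(1)$, and each has Weyl dimension $(p-q+1)\cdot O(m^{2n-2})$, keeping the total size $O(m^{2n-2})$. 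The main obstacle is carrying out the highest weight computation cleanly and verifying that the $\mathrm{GL}_{n+1}$-invariance of $\Delta^k$ survives the Serre duality identification.
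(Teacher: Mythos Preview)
Your proposal is correct and follows essentially the same route as the paper: both arguments use the $\mathrm{GL}_{n+1}$ (the paper uses $\mathrm{SL}_{n+1}$, which is immaterial) equivariance of multiplication by $f$, the multiplicity-free Pieri decomposition of the two tensor products, Schur's lemma to reduce to a scalar on each common summand, and the same highest-weight-vector check---applying the $x_0^k\otimes\partial_{y_0}^k$ term to the $i=0$ summand---to certify nonvanishing, yielding the identical constant $(b-j)!/(b-j-k)!$. Your organization is slightly tighter: you observe directly that in the strict cases $a_1>a_2$ (resp.\ $a_2>a_1$) the cokernel (resp.\ kernel) is literally zero, and you isolate the boundary case $a_1=a_2$ where the leftover summands satisfy $p-q=O(1)$ and hence have Weyl dimension $O(m^{2n-2})$; the paper reaches the same conclusions but splits Case~3 into three sub-cases according to the sign of $2k-(n+1)$.
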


Note that in particular, this result shows that when $a_1=a_2$, the growth of both the kernel and the cokernel is $O(m^{2n-2})$.  With this result in hand, we can finish the proof of this lemma: Then, by Lemma \ref{lem:representation}, it follows that the either kernel and cokernel of the multiplication by $g$ map are of size $O(m^{2n-2})$ as $m\rightarrow\infty$.  In particular, this implies that at least one of the asymptotic cohomology groups of index $i=n,n-1$ vanish for $D|_Y$.  Therefore, $D|_Y$ is asymptotically pure.
\hfill\qed\vspace{.1in}

We discuss some unusual features of this proof in the Discussion Section \ref{sec:conc}.

\subsection{Special Cases}
We begin with the $\mathbb{P}^2\times\mathbb{P}^2$ case:  In this $X$ is a 3-fold, and, by a result in \cite{MyThesis}, the proof of the result above can be simplified.  In this case, it is necessary and sufficient to only consider the behavior of the single divisor $D=(H_1-H_2)$ on $X$.  While this simplification does not change the final result, it does simplify the proof.  In addition, if $X$ is of bidegree $(1,1)$, there is complete characterization of AP varieties.

\begin{corollary}
Let $X$ be a hypersurface of $\mathbb{P}^n\times\mathbb{P}^n$ of bidegree $(1,1)$. Then $X$ is AP if and only if it is smooth.
\end{corollary}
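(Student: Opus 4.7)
The plan is to stratify bidegree $(1,1)$ hypersurfaces by the rank $r$ of the associated bilinear form. After a $GL_{n+1}\times GL_{n+1}$ change of basis any such $X$ becomes $X_r=\{\sum_{i<r}x_iy_i=0\}$; this is irreducible iff $r\ge 2$, smooth iff $r=n+1$, and for $2\le r\le n$ singular along $\mathbb{P}^{n-r}\times\mathbb{P}^{n-r}$. For the ``if'' direction, $X_{n+1}$ is the standard point-hyperplane incidence variety, a homogeneous space under the diagonal $PGL_{n+1}$-action; since every smooth bidegree $(1,1)$ hypersurface is projectively equivalent to $X_{n+1}$, the result of \cite{Kuronya:Article:Cohomological} that homogeneous spaces are AP finishes this direction.

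For the ``only if'' direction, let $X=X_r$ with $2\le r\le n$, and consider $D=(H_1-H_2)|_X$. Because $g=\sum_{i<r}x_iy_i$ is invariant under the swap $(x,y)\mapsto(y,x)$, this swap restricts to an automorphism of $X_r$ sending $D$ to $-D$, so $\widehat h^i(X_r,D)=\widehat h^i(X_r,-D)$. Combining with the asymptotic Serre duality on the Gorenstein hypersurface $X_r$ (which gives $\widehat h^i(X_r,D)=\widehat h^{2n-1-i}(X_r,-D)$) yields $\widehat h^{n-1}(X_r,D)=\widehat h^n(X_r,D)$, so it suffices to show that one of these is strictly positive.

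Running the argument of Theorem \ref{thm:main} with this (now degenerate) $g$, the long exact sequence identifies $H^{n-1}(X_r,mD)$ with the kernel of the multiplication-by-$g$ map of (\ref{eq:sym}). I would split $\mathbb{C}^{n+1}=V_1\oplus V_2$ into the coordinates appearing in $g$ (dimension $r$) and those left idle (dimension $n+1-r$), and similarly in the $y$-factor; because $g$ uses only $V_1$-coordinates, the map preserves the $V_2$-degrees $(q,q')$ in both factors and restricts on each $(q,q')$-piece to the full-rank multiplication map on $V_1=\mathbb{C}^r$ analyzed in Lemma \ref{lem:representation} (applied with $n$ replaced by $r-1$).

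The hardest step will be assembling the contributions: the $V_1$-subproblem kernel is small only in the ``balanced'' range of $(q,q')$, while unbalanced pieces contribute a kernel of size equal to the source/target deficit. Summing these contributions over the $O(m^2)$ pairs $(q,q')$, weighted by the $V_2$-multiplicities $\binom{q+n-r}{n-r}\binom{q'+n-r}{n-r}$, should yield $\dim\ker=\Omega(m^{2n-1})$ with positive leading coefficient. Already in the smallest singular case $n=r=2$ this bookkeeping gives $\dim\ker=\tfrac{1}{6}m^3+O(m^2)$, so $\widehat h^{n-1}(X_r,D)=1$; the general $r$ argument proceeds by the same method. Both $\widehat h^{n-1}(X_r,D)$ and $\widehat h^n(X_r,D)$ are then strictly positive, violating asymptotic purity.
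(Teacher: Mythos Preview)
Your proposal is correct and takes a somewhat different route from the paper's sketch.

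For the ``if'' direction, the paper specializes the argument of Theorem~\ref{thm:main}: when $k=1$ the special fiber $Y$ is already the reduced smooth hypersurface $\{\sum x_i y_i = 0\}$, so the semicontinuity step is unnecessary and the computation on $Y$ gives the result directly. Your observation that this hypersurface is the partial flag variety $\mathrm{Fl}(1,n;n+1)$, hence homogeneous and therefore AP by \cite{Kuronya:Article:Cohomological}, is a cleaner shortcut that avoids the kernel/cokernel analysis entirely. (Minor point: the action preserving $\sum x_i y_i$ is the twisted diagonal $g\cdot(x,y)=(gx,(g^{-1})^T y)$, not the straight diagonal, so you should phrase the homogeneity claim accordingly.)

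For the ``only if'' direction, both arguments compute that the kernel of the multiplication-by-$g$ map grows like $m^{2n-1}$, so $\widehat h^{n-1}(X_r,D)>0$. The paper (which only writes out $n=2$) then invokes $(D|_X)^{2n-1}=0$ together with asymptotic Riemann--Roch to force $\widehat h^n>0$ as well. Your swap-plus-Serre-duality identity $\widehat h^{n-1}(X_r,D)=\widehat h^n(X_r,D)$ is an equivalent but more transparent way to reach the same conclusion; note that it relies on asymptotic Serre duality for the \emph{singular} Gorenstein hypersurface $X_r$, which is fine since the continuity and homogeneity of $\widehat h^i$ are stated for irreducible projective varieties. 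Your decomposition by $V_2$-bidegree $(q,q')$ organizes the kernel computation more systematically than the paper's ad hoc exhibition of kernel elements, and it makes the general-$n$ statement accessible, whereas the paper treats only $n=2$ explicitly and gestures at the rest.

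One small gap relative to the paper: the paper also handles the rank-$1$ orbit $\{x_0 y_0 = 0\}$, which you exclude as reducible. Since asymptotic purity is defined only for irreducible varieties, your restriction to $r\ge 2$ is the natural reading of the corollary; if reducible hypersurfaces are meant to be included you would need a separate (easy) remark.
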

{\em Proof sketch.}
We prove this only in the case where $n=2$ because when $n>2$, the techniques of the proof are not more difficult, but the notation is considerably more challenging.  Consider the $GL(3,\mathbb{C})$ action on one of the $\mathbb{P}^2$'s and hence on forms of bidegree $(1,1)$; this action has three orbits with representatives $x_0y_0$, $x_0y_0+x_1y_1$, and $x_0y_0+x_1y_1+x_2y_2$.  It is easy to see that the varieties corresponding to the first two are singular and the third is nonsingular.  Specializing the argument in the theorem above since the special fiber is reduced, it follows that the third variety is AP and so are all varieties in its $GL$ orbit.  For the other two varieties, we use the map (\ref{eq:sym}) with $D=(H_1-H_2)$ above and show that the cokernel of $g$ has growth greater than $C\cdot m^3$ for some constant $C$.  Since $D^3\cdot X=0$, this will immediately imply that $D|_X$ is not AP via the asymptotic Riemann-Roch formula.

For the first choice of representative ($x_0y_0$), the image of a product of monomials in $Sym^{m-1}\mathbb{C}^3\otimes Sym^{m-2}\mathbb{C}^3$ is either $0$ or linearly independent.  The elements which map to zero do not include $y_0$ in the $Sym^{m-2}\mathbb{C}^3$ part since $y_0^\ast$ acts by differentiation.  Therefore, the kernel of multiplication by $g$ is $Sym^{m-1}\mathbb{C}^3\otimes Sym^{m-2}\mathbb{C}^2$ is of size $\binom{2+(m-1)}{2}\binom{1+(m-2)}{1}=(m^3-m)/2$, which has $C\cdot m^3$ growth and therefore $V(x_0y_0)$ is not AP.

For the second choice of representative ($x_0y_0+x_1y_1$), one notes that for any product $p$ of monomials in $Sym^m\mathbb{C}^3\otimes Sym^{m-3}\mathbb{C}^3$, there are at most two products of monomials in $Sym^{m-1}\mathbb{C}^3\otimes Sym^{m-2}\mathbb{C}^3$ whose image includes the term $p$.  In particular, for any fixed $k=0,\cdots,m-2$, the term
$$\sum_{j=0}^k(-1)^j\binom{k}{j}x_0^jx_1^{k-j}\otimes x_0^{k-j}x_1^jx_2^{m-2-k}$$ is taken to zero by $x_0\otimes x_0^\ast+x_1\otimes x_1^\ast$.  Multiplying by any monomial in $Sym^{m-1-k}\mathbb{C}^3$ factors through the multiplication by $g$ and therefore, the kernel of this multiplication has size
$$\sum_{k=0}^{m-2}\binom{2+(m-1-k)}{2}$$
which again has $C\cdot m^3$ growth and therefore $V(x_0y_0+x_1y_2)$ is not AP.\qed

\section{Bounding the Cohomology on the Special Fiber}\label{sec:fiber}
In this section, we use representation theory to bound the asymptotic cohomology of $D|_Y$.  In particular, we compute the kernel and cokernel of the map appearing in Equation (\ref{eq:sym}).

\begin{replemma}{lem:representation}
Let $f=\left(\sum_{i=0}^n x_i\otimes y_i^\ast\right)^k\in\Sym^k\mathbb{C}^{n+1}\otimes\Sym^k(\mathbb{C}^{n+1})^\vee$.  Consider the natural map from $\Sym^{ma_1-k}\mathbb{C}^{n+1}\otimes\Sym^{ma_2+k-(n+1)}\mathbb{C}^{n+1}$ to $\Sym^{ma_1}\mathbb{C}^{n+1}\otimes\Sym^{ma_2-(n+1)}\mathbb{C}^{n+1}$ given by multiplication by $f$.  When $a_1\geq a_2$, the size of the cokernel of this map is $O(m^{2n-2})$ as $m\rightarrow\infty$ and when $a_1\geq a_2$, the size of the kernel of this map is $O(m^{2n-2})$ as $m\rightarrow\infty$ with constants depending on $n$ and $k$.
\begin{proof}
Consider the standard $SL(n+1,\mathbb{C})$ action on this map: for this choice of $f$, $f$ is a $SL(n+1,\mathbb{C})$-module homomorphism, and, therefore, Schur's lemma applies in this situation.  We now break these modules into irreducible components.  In general, Pieri's formula can be used to compute the tensor product of symmetric powers of the standard irreducible representation for $SL(n+1,\mathbb{C})$.  In particular,
$\Sym^a\mathbb{C}^{n+1}\otimes\Sym^b\mathbb{C}^{n+1}$ with $a\geq b$ decomposes into irreducible representations as $\bigoplus_{i=0}^{b}\Gamma_{a+b-2i,i,0,\cdots,0}$.  Here, we are using the notation appearing in \cite{Fulton:Harris}.  In our particular case, the decompositions are as follows (for $m\gg0$):

Case 1: $a_1>a_2$.  In this case, for $m\gg0$, $ma_1-k>ma_2+k-(n+1)$ and $ma_1>ma_2-(n+1)$.  In this case, the first tensor product decomposes as
\begin{equation}\label{eq:case1a}
\Sym^{ma_1-k}\mathbb{C}^{n+1}\otimes\Sym^{ma_2+k-(n+1)}\mathbb{C}^{n+1}\simeq \bigoplus_{i=0}^{ma_2+k-(n+1)}\Gamma_{m(a_1+a_2)-(n+1)-2i,i,0,\cdots,0}.\end{equation}
Meanwhile, the second product decomposes as
\begin{equation}\label{eq:case1b}
\Sym^{ma_1}\mathbb{C}^{n+1}\otimes\Sym^{ma_2-(n+1)}\mathbb{C}^{n+1}\simeq \bigoplus_{i=0}^{ma_2-(n+1)}\Gamma_{m(a_1+a_2)-(n+1)-2i,i,0,\cdots,0}.\end{equation}
The difference between these two sums is the sum
$$
\bigoplus_{i=ma_2+1-(n+1)}^{ma_2+k-(n+1)}\Gamma_{m(a_1+a_2)-(n+1)-2i,i,0,\cdots,0}.
$$
We use the formula to compute the dimension of these irreducible components in \cite[\S15.3, p.224]{Fulton:Harris} to see that the leading term of each of these $\Gamma$'s is $m^{2n-1}a_1^{n-1}a_2^{n-1}(a_1-a_2)>0$.  Since we are adding $k$ of these, this implies that the size of the difference is $O(m^{2n-1})$, and, therefore, the kernel is at least of size $O(m^{2n-1})$.

Case 2: $a_1<a_2$.   In this case, for $m\gg0$, $ma_1-k<ma_2+k-(n+1)$ and $ma_1<ma_2-(n+1)$.  In this case, the first tensor product decomposes as
\begin{equation}\label{eq:case2}
\Sym^{ma_1-k}\mathbb{C}^{n+1}\otimes\Sym^{ma_2+k-(n+1)}\mathbb{C}^{n+1}\simeq \bigoplus_{i=0}^{ma_1-k}\Gamma_{m(a_1+a_2)-(n+1)-2i,i,0,\cdots,0}.\end{equation}
Meanwhile, the second product decomposes as
$$\Sym^{ma_1}\mathbb{C}^{n+1}\otimes\Sym^{ma_2-(n+1)}\mathbb{C}^{n+1}\simeq \bigoplus_{i=0}^{ma_1}\Gamma_{m(a_1+a_2)-(n+1)-2i,i,0,\cdots,0}.$$
The difference between these two sums is the sum
$$
\bigoplus_{i=ma_1-k}^{ma_1}\Gamma_{m(a_1+a_2)-(n+1)-2i,i,0,\cdots,0}.
$$
We again use the formula to see that the leading term of each of these $\Gamma$'s is $m^{2n-1}a_1^{n-1}a_2^{n-1}(a_2-a_1)>0$.  Since we are adding $k$ of these, this implies that the size of the difference is $O(m^{2n-1})$, and, therefore, the cokernel is at least of size $O(m^{2n-1})$.

Case 3: $a_1=a_2$.  In this case, $ma_1>ma_2-(n+1)$ and therefore the decomposition of the second product is the same as Equation \ref{eq:case1b} above.  Now, the decomposition of the first product depends on the relationship between $k$ and $n$:

If $(n+1)/2\leq n+1\leq k$ then $ma_1-k<ma_2+k-(n+1)$.  Thus, the decomposition of the first product is the same as Equation \ref{eq:case1a} above.  Now, the difference between the sums is
$$\bigoplus_{i=ma_1-k}^{ma_2-(n+1)}\Gamma_{m(a_1+a_2)-(n+1)-2i,i,0,\cdots,0}.$$
We again use the formula above to see that the leading term of each of these $\Gamma$'s is $m^{2n-1}a_1^{n-1}a_2^{n-1}(a_2-a_1)=0$.  Since we are adding the dimension of $k-n$ representations, each of size $O(m^{2n-2})$, this term is small enough to ignore.

If, instead, $(n+1)/2\leq k\leq n+1$, then the decompositions remain as above, but the difference in the direct sums become
$$\bigoplus_{i=ma_1-(n+1)}^{ma_2-k}\Gamma_{m(a_1+a_2)-(n+1)-2i,i,0,\cdots,0}.$$
We again use the formula above to see that the leading term of each of these $\Gamma$'s is $m^{2n-1}a_1^{n-1}a_2^{n-1}(a_2-a_1)=0$.  Since we are adding the dimension of $k-n$ representations, each of size $O(m^{2n-2})$, this term is small enough to ignore.

Finally, if $(n+1)/2\geq k$, then $ma_1-k>ma_2+k-(n+1)$.  Thus, the decomposition of the second product is the same as Equation \ref{eq:case2} above.  Now, the difference between the sums is
$$\bigoplus_{i=ma_2-(n+1)}^{ma_2+k-(n+1)}\Gamma_{m(a_1+a_2)-(n+1)-2i,i,0,\cdots,0}.$$
We again use the formula above to see that the leading term of each of these $\Gamma$'s is $m^{2n-1}a_1^{n-1}a_2^{n-1}(a_2-a_1)=0$.  Since we are adding the dimension of $k-n$ representations, each of size $O(m^{2n-2})$, this term is small enough to ignore.

By Schur's lemma, the multiplication by $f$ map takes irreducible representations to the same type of representations.  Therefore, this implies that the kernel or cokernel contains, at least, the irreducible representations, as described above.  For the remainder of this proof, we will show that this is the entire kernel or cokernel.  Since each representation occurs with multiplicity one, we can compute the kernel or cokernel via the {\em highest weight vector} in the representations.  In the $\Gamma_{m(a_1+a_2)-(n+1)-2i,i,0,\cdots,0}$'s in the first product, the highest weight vector can be written as
$$\sum_{j=0}^i(-1)^j\binom{i}{j}x_0^{ma_1-k-j}x_1^{j}\otimes x_0^{ma_2+k-(n+1)-i+j}x_1^{i-j}.$$
We now show that the appropriate highest weight vectors do not vanish under multiplication by $f$.  It is sufficient to show that the term with $j=0$ does not vanish when $x_0^k\otimes(x_0^\ast)^k$ is applied to it since this will uniquely have the highest power of $x_0$.  Under multiplication by this element, we have
$$\frac{(ma_2+k-(n+1)-i)!}{(ma_2-(n+1)-i)!}x_0^{ma_1}\otimes x_0^{ma_2-(n+1)-i}x_1^i$$ which does not vanish for the $i$'s of interest.  Therefore, the kernels and cokernels are exactly as described above (and the remaining kernels and cokernels are zero).  This proves the lemma.
\end{proof}
\end{replemma}

\section{Discussion and Conclusion}\label{sec:conc}
In this paper, we have provided a class of examples which are AP.  This provides evidence for Conjecture \ref{conj}, but is far from a proof and the conjecture requires further study.  One portion of this proof that is particularly interesting is the choice of the special fiber where the computation occurs.  This nonreduced fiber is one of the ``worst'' fibers in the family because, in many cases, results would break down on such a nonreduced fiber; however, even there, the asymptotic cohomology vanishes.  On the other hand, the underlying variety in this case is smooth which means that this fiber might not be as difficult as expected.  This gives some hope that there may be either a simple description or that computations on other nonreduced fibers may give further insight into this problem.

For the more general question of conditions for asymptotic purity, it seems somewhat unlikely that the condition that the big and ample cones coincide would be enough to guarantee asymptotic purity.  However, it is certainly a necessary condition due to the result of \cite{deFernexetal:Article}.  Our end goal is to be able to describe the failure of asymptotic purity as a negative property because, for instance, surfaces are AP if and only if they do not contain any curves with negative self-intersection.  Some hope for a more appropriate condition comes from the recent paper \cite{Lazarsfeld:Pseudoeffective}.  In particular, if $\psef^k(X)$, the pseudoeffective cone of codimension $k$ cycles on $X$, properly contains $\nef^k(X)$, the cone of codimension $k$ cycles dual to $\psef^{n-k}(X)$, this would be a reasonable notion of negativity.  Note that in \cite{Lazarsfeld:Pseudoeffective}, the type of containment described above is impossible, but this should not be a worry because there, they study Abelian varieties, which are guaranteed to be AP.  In addition, this condition restricts to the known condition and Conjecture \ref{conj} in the two- and three-dimensional cases, respectively.

\bibliographystyle{plain}
\bibliography{VeryGeneral}

\end{document}